\newcommand{\R}{\mathbb{R}}
\newcommand{\Z}{\mathbb{Z}}
\newtheorem{teo}{Theorem}[section]
\newtheorem*{teo*}{Theorem}
\newtheorem{lemma}[teo]{Lemma}
\newtheorem{prop}[teo]{Proposition}
\newtheorem*{prop*}{Proposition}
\newtheorem{cor}[teo]{Corollary}
\begin{document}
\title{On loose Legendrian knots in rational homology spheres}
\author{\scshape{Alberto Cavallo}\\ \\
 \footnotesize{Department of Mathematics and its Application, Central European University,}\\
 \footnotesize{Budapest 1051, Hungary}\\ \\ \small{cavallo\_alberto@phd.ceu.edu}}
\date{}

\maketitle

\begin{abstract}
 We prove that loose Legendrian knots in a rational homology contact 3-sphere, satisfying some additional hypothesis,
 are Legendrian isotopic if and 
 only if they have the same classical invariants. 
 The proof requires a result of Dymara on loose Legendrian knots and Eliashberg's classification of overtwisted contact
 structures on 3-manifolds.
\end{abstract}
\section{Introduction}
Knot theory in contact 3-manifolds turned out to be a very interesting field to study. In this setting, an oriented
knot is called Legendrian if it is everywhere tangent to the contact structure and two such knots
are said to be equivalent if they are Legendrian isotopic; that is, there is an isotopy between them such that they 
are Legendrian at any step. In the last twenty years much work has been done in order to find some criteria to determine
whether two Legendrian knots are Legendrian isotopic or not.
Three invariants can be immediately defined from the definition of Legendrian knot. For this reason
they are usually called classical invariants. 

The first one is the knot type, that is the smooth isotopy class of our oriented Legendrian
knot $K$. The knot type of $K$ is a Legendrian invariant; in fact it is known that two Legendrian 
knots are Legendrian isotopic if and only if there is an ambient contact isotopy of the 3-manifold sending the first knot into
the second one as shown in \cite{Etnyre4}. 

The other two classical invariants are the Thurston-Bennequin number, which
is defined as the linking number of the contact framing of $K$ respect to a Seifert framing of $K$,
and the rotation number; the latter
being the numerical obstruction to extending a non-zero vector field, everywhere 
tangent to the knot, to a Seifert surface of $K$ (see \cite{Etnyre4}). These two
invariants are usually well-defined only for null-homologous knots in a rational homology 3-sphere, but a generalization 
exists for every Legendrian knot in such a manifold. See \cite{Etnyre2} for details.

Legendrian knots in overtwisted contact 3-manifolds come in two types: loose and non-loose. A Legendrian knot is loose if
also its complement is overtwisted, while it is non-loose if the complement is tight. More explicitely, a Legendrian knot is 
loose if and only if we can find an overtwisted disk that is disjoint from the knot.
While it was known that non-loose Legendrian knots are not classified by their classical invariants \cite{LOSS},
in the case of loose knots such example was found only recently by Vogel \cite{Vogel}.
Conversely, there were some results that go in the opposite direction.

Etnyre's coarse classification of loose Legendrian knots \cite{Etnyre5} is probably the most important one. It says
that loose knots are completely determined by their classical invariants, but only up to contactomorphism, which is a 
weaker relation than Legendrian isotopy. Another result was proved by Dymara in \cite{Dymara} and it states
that two Legendrian knots, with same classical invariants, such that the complement of
their union contains an overtwisted disk are 
Legendrian isotopic (Theorem \ref{teo:dymara}). This result holds only in rational homology spheres.

In this paper we show that Dymara's result can be strengthened. In fact we prove the following theorem.
\begin{teo}
 \label{teo:main}
 Consider a rational homology contact 3-sphere $(M,\xi)$. Suppose that there are two loose Legendrian knots $K_1$ and $K_2$ 
 in $(M,\xi)$ such 
 that there exists a pair of disjoint overtwisted disks $(E_1,E_2)$, where $E_i$ is contained in the complement of $K_i$ for 
 $i=1,2$. Then $K_1$ and $K_2$ are Legendrian isotopic if and only if they have the same classical invariants.
\end{teo}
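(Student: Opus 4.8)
The forward implication I would dispose of immediately: the knot type, the Thurston--Bennequin number and the rotation number are all preserved under Legendrian isotopy (as recalled above, following \cite{Etnyre4,Etnyre2}), so two Legendrian isotopic knots automatically share their classical invariants. All the content is in the converse, and my plan is to reduce it to Dymara's theorem (Theorem \ref{teo:dymara}) by manufacturing a single overtwisted disk sitting in the complement of $K_1\cup K_2$, to which that theorem can be applied verbatim. The hypothesis that the two witnessing disks $E_1,E_2$ are disjoint, with $E_i$ avoiding $K_i$, is exactly what should make this manufacture possible.

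The reduction I have in mind is the following. It suffices to Legendrian isotope $K_2$ to a knot $K_2'$ that is disjoint from $E_1$. Indeed, once this is achieved, $E_1$ is an overtwisted disk lying in the complement of $K_1\cup K_2'$; since Legendrian isotopy preserves the classical invariants, $K_2'$ carries the same invariants as $K_2$, hence as $K_1$, so Dymara's theorem (valid precisely because $(M,\xi)$ is a rational homology sphere) applies to the pair $(K_1,K_2')$ and yields $K_1\sim K_2'$. Composing with the isotopy $K_2'\sim K_2$ then gives $K_1\sim K_2$. Note that I am using $E_1$ rather than $E_2$ only because $E_1$ is \emph{already} disjoint from $K_1$; the whole problem is thus concentrated in relocating $K_2$.

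The crux is therefore a separate lemma: a loose Legendrian knot $K_2$, whose looseness is witnessed by an overtwisted disk $E_2$ in its complement, can be Legendrian isotoped off any overtwisted disk $E_1$ that is disjoint from $E_2$. To attack this I would first perturb $K_2$ to be transverse to $E_1$, so that $K_2\cap E_1$ is a finite collection of signed points; their signed count is the algebraic intersection number $K_2\cdot E_1\in\Z$, which equals $\mathrm{lk}(K_2,\partial E_1)$ since $E_1$ is itself an embedded disk bounding $\partial E_1$. Two intersection points of opposite sign that are adjacent along $K_2$ can be removed by an elementary Legendrian isotopy supported near the connecting arc of $K_2$, leaving exactly $\lvert K_2\cdot E_1\rvert$ intersections, all of the same sign.

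The genuinely non-local difficulty, and the step I expect to be the main obstacle, is to remove these last same-sign intersections, i.e.\ to unwind $K_2$ from $\partial E_1$; here the algebraic obstruction $\mathrm{lk}(K_2,\partial E_1)$ cannot be killed by any local move. This is exactly where both hypotheses must be spent: the spare overtwisted disk $E_2\subset M\setminus K_2$ supplies the flexibility needed to perform a Legendrian isotopy of $K_2$ that alters its linking with $\partial E_1$, and Eliashberg's classification of overtwisted contact structures is what guarantees that such a formal, linking-changing isotopy is actually realizable inside the overtwisted complement $M\setminus K_2$ without leaving the Legendrian isotopy class of $K_2$. I anticipate that making this unwinding rigorous, and checking that it can be carried out while keeping $K_2$ disjoint from $E_2$ throughout, is the technical heart of the argument; once it is in place, $K_2$ has been pushed off $E_1$ and the reduction of the second paragraph completes the proof.
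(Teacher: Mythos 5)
Your reduction to Theorem \ref{teo:dymara} is sound and the forward implication is handled correctly, but the entire content of the theorem has been displaced into your ``unwinding lemma'' --- that $K_2$ can be Legendrian isotoped off $E_1$ --- and none of the tools you invoke actually proves it; you yourself defer it as ``the technical heart,'' so as it stands the proposal has a genuine gap rather than a proof. Two specific problems. First, the appeal to Eliashberg's classification is a category error: that theorem classifies overtwisted contact \emph{structures} (homotopic plane fields give isotopic structures); it is not an h-principle for Legendrian knots, and it supplies no mechanism for realizing a ``formal, linking-changing isotopy'' of $K_2$ within its Legendrian isotopy class. Indeed, Vogel's example cited at the end of Section \ref{section:main} shows that loose knots with equal classical invariants need \emph{not} be Legendrian isotopic, so no knot-level flexibility statement of the kind you want can be true without using the disjointness of $E_1$ and $E_2$ in an essential, constructive way --- and in your sketch $E_2$ only appears in the phrase ``supplies the flexibility,'' never in an actual construction. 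Second, the linking-number framing is a red herring: $\mathrm{lk}(K_2,\partial E_1)$ is not invariant under Legendrian (i.e.\ ambient contact) isotopy, since the knot may cross $\partial E_1$; smoothly, \emph{all} intersection points of $K_2$ with $E_1$, regardless of sign, can be pushed off the edge of the disk, so there is no algebraic obstruction at all. The only difficulty is Legendrian realizability of that smooth push-off, which is exactly the part you leave blank.

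For comparison, the paper never tries to move $K_2$ off a prescribed disk; it builds a \emph{third} knot and applies Theorem \ref{teo:dymara} twice. Each $E_i$ is enclosed in a ball $B_i$ disjoint from $K_i$; Eliashberg's classification --- applied where it is legitimate, to contact structures --- shows $(B_i,\xi\lvert_{B_i})$ is contact isotopic to $(B_i,\xi\lvert_{B_i})\#(S^3,\xi_0)$, which produces inside $B_i$ a ball $D_i$ with overtwisted restriction such that overtwisted disks survive both inside $D_i$ and in $B_i\setminus D_i$ (Proposition \ref{prop:disks}). A second application of the same argument absorbs $D_1$: the manifold $M\setminus(D_1\cup D_2)$ is contact isotopic to $M\setminus D_2$ minus a Darboux ball, so $K_2\subset M\setminus D_2$ can be re-embedded as a Legendrian knot $K\subset M\setminus(D_1\cup D_2)$ with the same classical invariants. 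Then disks inside $D_1$ witness looseness of $K_1\cup K$, disks inside $D_2$ witness looseness of $K_2\cup K$, and Dymara's theorem gives $K_1\sim K\sim K_2$. If you wanted to salvage your one-application strategy, you would have to prove your unwinding lemma, and the natural proof of it is essentially this same construction --- so nothing is gained by the detour.
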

Though we still need an assumption on the overtwisted disks, this version can be applied in many interesting cases like 
disjoint unions of Legendrian knots.
We say that a Legendrian 2-component link $L$ is split if $(M,\xi)$ can be decomposed into $(M_1\#M_2,\xi_1\#\xi_2)$ and 
$K_i\hookrightarrow(M_i,\xi_i)$ for $i=1,2$. In other words, if $L$ is the disjoint union of $K_1$ and $K_2$.
\begin{cor}
 \label{cor:disjoint}
 Suppose $K_1$ and $K_2$ are two loose Legendrian knots in the rational homology contact sphere $(M,\xi)$ such that 
 $K_1\cup K_2$ is a split Legendrian link. 
 Then they are Legendrian isotopic if and only if they have the same classical invariants.
\end{cor}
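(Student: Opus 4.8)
The plan is to derive Corollary \ref{cor:disjoint} directly from Theorem \ref{teo:main}. The forward implication (Legendrian isotopic knots have equal classical invariants) is immediate, so assume $K_1$ and $K_2$ have the same classical invariants; by the theorem it then suffices to produce a pair of disjoint overtwisted disks $(E_1,E_2)$ with $E_i$ contained in the complement of $K_i$. Write the splitting as $(M,\xi)=(M_1\#M_2,\xi_1\#\xi_2)$, let $S\subset M$ be the separating connected-sum sphere, and regard $K_i$ as lying in the $M_i$-side of $S$, so that $M\setminus K_1=(M_1\setminus K_1)\#M_2$ and $M\setminus K_2=M_1\#(M_2\setminus K_2)$. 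All the work lies in placing the two disks on opposite sides of $S$, or else both on the same side but away from the relevant knot.

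I would use two standard facts. First, a connected sum is tight exactly when both summands are tight; equivalently, if a contact manifold is overtwisted then at least one summand in any connected-sum decomposition is overtwisted. Second, every overtwisted contact $3$-manifold contains two disjoint overtwisted disks: in the radial model $\ker(\cos r\,\dd z+r\sin r\,\dd\theta)$ near an overtwisted disk the slices $\{z=c\}$ are overtwisted for a range of heights $c$, and two different heights give disjoint disks. Since both $K_1$ and $K_2$ are loose, $M\setminus K_1$ and $M\setminus K_2$ are overtwisted; in particular $M$ is overtwisted, so at least one of $M_1,M_2$ is overtwisted.

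Now argue by cases. If both $M_1$ and $M_2$ are overtwisted, choose an overtwisted disk $E_1$ inside the $M_2$-side and an overtwisted disk $E_2$ inside the $M_1$-side; shrinking the connected-sum balls to avoid these disks keeps them valid, and then $E_1\subset M\setminus K_1$ and $E_2\subset M\setminus K_2$ are automatically disjoint, being on opposite sides of $S$. If instead one summand is tight, say $M_2$, then overtwistedness of $M\setminus K_1=(M_1\setminus K_1)\#M_2$ together with the first fact forces $M_1\setminus K_1$ to be overtwisted; applying the second fact there, I place two disjoint overtwisted disks $E_1,E_2\subset M_1\setminus K_1$. Both avoid $K_1$, both lie in the $M_1$-side and hence avoid $K_2$, and they are disjoint by construction.

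The main obstacle is this second case: one cannot simply cross-place the disks, because $K_2$ need not be loose inside the tight summand $M_2$, so $M_2\setminus K_2$ may be tight and carry no overtwisted disk. The essential point is therefore the elementary observation that a single overtwisted region already supplies two disjoint overtwisted disks, together with the general-position remarks that let one push these disks off the connected-sum sphere and off the knot in the opposite summand. Once $(E_1,E_2)$ has been arranged in either case, Theorem \ref{teo:main} applies and produces the desired Legendrian isotopy.
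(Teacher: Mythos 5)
Your proof is correct, but it takes a genuinely different route from the paper's. The paper organizes its cases by whether each knot is loose \emph{in its own summand}: if $K_1$ is loose in $(M_1,\xi_1)$, or if $(M_1,\xi_1)$ is tight (which by Theorem \ref{teo:colin} forces $K_2$ to be loose in $(M_2,\xi_2)$), then a single overtwisted disk in the relevant summand, pushed off the connected-sum ball, lies in the complement of the whole union $K_1\cup K_2$, and Dymara's Theorem \ref{teo:dymara} finishes immediately; Theorem \ref{teo:main} is invoked only in the residual case where both knots are non-loose in their summands (hence both summands overtwisted), with exactly the cross-placement of disks you use. You instead split according to tightness of the summands and funnel every case through Theorem \ref{teo:main}: when both summands are overtwisted your argument agrees with the paper's residual case, but when one summand is tight you manufacture \emph{two} disjoint overtwisted disks inside the single overtwisted complement $M_1\setminus K_1$. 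This buys uniformity (Theorem \ref{teo:main} is your only input, and the case split by tightness is clean and visibly exhaustive) at the cost of an ingredient the paper never needs: that one overtwisted disk yields two disjoint ones. That fact is true, but your radial-model justification implicitly assumes the disk has the standard invariant neighborhood --- parallel push-offs of an arbitrary overtwisted disk need not even have Legendrian boundary --- so one should first put the disk in standard form (elimination lemma), or extract the two disks from Eliashberg's classification as the paper itself does inside the proof of Theorem \ref{teo:main}; this is comparable in rigor to the paper's unproved Proposition \ref{prop:disks}, a caveat rather than a gap. Note finally that in your tight-summand case the two-disk detour is avoidable: once Colin's theorem gives that $M_1\setminus K_1$ is overtwisted, a single overtwisted disk there, kept off the connected-sum ball, already misses $K_1\cup K_2$, so Theorem \ref{teo:dymara} concludes directly --- this is precisely the economy the paper exploits and you give up.
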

This paper is organized as follows. In Section \ref{section:main}
we define connected sums for contact 3-manifolds and Legendrian links and
we prove Theorem \ref{teo:main}. In Section \ref{section:disjoint} we explain what 
is the disjoint union of Legendrian knots
and give a precise definition of split Legendrian links. Moreover, we apply our main result to this kind of loose knots.

\paragraph*{Acknowledgements:}
The author would like to thank Andr\'as Stipsicz for suggesting to think about this problem. 
The author is supported by the ERC Grant LDTBud from the Alfr\'ed R\'enyi Institute of Mathematics and a Full Tuition Waiver
for a Doctoral program at Central European University.

\section{A classification theorem for loose Legendrian knots}
\label{section:main}
\subsection{Contact and Legendrian connected sum}
The definition of the connected sum of two 3-manifolds can be easily given also in the contact setting. Let us take two
connected contact manifolds $(M_1,\xi_1)$ and $(M_2,\xi_2)$; we call $M_i'$ (for $i=1,2$) the manifolds obtained from $M_i$ by
removing an open Darboux ball, that is a 3-ball with the standard contact structure, and we define $(M_1\#M_2,\xi_1\#\xi_2)$
the contact
manifold which is gotten by gluing together $M_1'$ and $M_2'$.
The structure $\xi_1\#\xi_2$ is well-defined because it is always possible to glue two contact structures on the boundary
of a Darboux ball. Moreover, the result is independent of the choice of the balls themselves
and we have the following proposition.
\begin{prop}
 \label{prop:disks}
 For every Darboux ball $B$ in the overtwisted manifold $(M,\xi)$ there exists at least one overtwisted disk disjoint from
 $B$.
 In particular, if $(M,\xi)=(M_1\#M_2,\xi_1\#\xi_2)$ and a summand $M_i$ is overtwisted then we
 can always find overtwisted disks which are contained entirely in $M_i'\subset M_1\#M_2$.
\end{prop}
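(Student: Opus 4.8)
The plan is to prove the two assertions in turn, deriving the ``in particular'' part as a direct consequence of the first. For the reduction, suppose $(M,\xi)=(M_1\#M_2,\xi_1\#\xi_2)$ with, say, $(M_1,\xi_1)$ overtwisted, and let $B_1\subset M_1$ be the open Darboux ball removed in forming the connected sum, so that $M_1'=M_1\setminus B_1$. Applying the first assertion to $(M_1,\xi_1)$ and the ball $B_1$ produces an overtwisted disk $D\subset M_1$ disjoint from $B_1$, hence contained in $M_1'$. Since the contact structure on $M_1'$ is left unchanged by the gluing, $D$ is an overtwisted disk of $(M_1\#M_2,\xi_1\#\xi_2)$ lying entirely in $M_1'$, as required. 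So everything comes down to the first assertion.

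To prove it, I would fix a Darboux ball $B$ in the overtwisted manifold $(M,\xi)$ and choose any overtwisted disk $D_0\subset M$; one exists because $M$ is overtwisted. Note first that $D_0$ cannot be contained in $\overline{B}$: the contact structure on a Darboux ball is the standard tight one, and a tight region contains no overtwisted disk. The goal is then to realise an ambient contact isotopy $\Psi_t$ of $(M,\xi)$ with $\Psi_1(B)\cap D_0=\emptyset$. Granting this, $\Psi_1^{-1}(D_0)$ is disjoint from $B$, and since $\Psi_1^{-1}$ is a contactomorphism it carries the overtwisted disk $D_0$ to another overtwisted disk; this image is the disk we want.

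It remains to construct such a $\Psi_t$, and this is the only real point. I would obtain it by first shrinking $B$ and then translating it into the complement of $D_0$. Working in Darboux coordinates on $B$, the standard contact form $dz-y\,dx$ is merely rescaled by the dilations $(x,y,z)\mapsto(\lambda x,\lambda y,\lambda^2 z)$, so a suitably cut-off contact Hamiltonian supported in $B$ generates a contact isotopy contracting $B$ into an arbitrarily small Darboux ball $B_\varepsilon$ about its centre $p$. Next, since $D_0$ is a compact surface, $M\setminus D_0$ is open and nonempty; picking $q\in M\setminus D_0$ and a path from $p$ to $q$, the transitivity of the group $\mathrm{Cont}_0(M,\xi)$ of contactomorphisms contact-isotopic to the identity on the connected manifold $M$ yields a contact isotopy $\chi_t$ with $\chi_1(p)=q$. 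For $\varepsilon$ small enough $\chi_1(B_\varepsilon)$ is a small neighbourhood of $q$, hence disjoint from the closed set $D_0$; composing the contraction with $\chi_t$ gives the desired $\Psi_t$.

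The main obstacle is exactly this last construction: one must turn the evident topological fact — a $2$-disk can be pushed off a $3$-ball in a connected $3$-manifold — into an honest \emph{contact} isotopy, so that overtwistedness is preserved automatically along the way. This is where the flexibility of contact isotopies enters, through the existence of compactly supported contact Hamiltonian flows (the dilations on the Darboux chart) together with the transitivity of $\mathrm{Cont}_0(M,\xi)$ on the connected manifold $M$; once these standard facts are in hand, the remainder is the bookkeeping indicated above.
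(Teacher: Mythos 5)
The paper itself states Proposition \ref{prop:disks} without proof, treating it as a known consequence of the flexibility of Darboux balls, so the comparison here is with the standard folklore argument rather than with a written proof. Your strategy is exactly that folklore argument: fix an overtwisted disk $D_0$, displace $B$ off $D_0$ by an ambient contact isotopy (shrink $B$ in Darboux coordinates, then carry the shrunken ball to a point $q\notin D_0$ using transitivity of contact isotopies on a connected contact manifold), and pull $D_0$ back by the inverse contactomorphism. The reduction of the ``in particular'' clause to the first assertion, the transitivity step, and the final pull-back step are all correct.

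However, there is one genuine flaw at the key step as written. You claim that a contact Hamiltonian \emph{supported in $B$} generates an isotopy contracting $B$ into a small ball $B_\varepsilon$. This is impossible: a flow generated by a vector field whose support is a compact subset of $B$ is the identity near $\partial B$ and outside $B$, hence it maps $B$ onto $B$ and can never displace $B$ as a set. What such a cut-off flow does achieve is to contract any fixed compact subball $B_R$ of the open ball $B$ into $B_\varepsilon$; consequently your $\Psi_1$ only satisfies $\Psi_1(B_R)\cap D_0=\emptyset$, and the argument as written proves only the weaker statement that for every compact subset $K$ of $B$ there is an overtwisted disk disjoint from $K$ (with the disk depending on $K$). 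The repair is standard but must be stated: the dilation Hamiltonian $H=2z-xy$ has to be cut off outside a neighbourhood of $\overline{B}$ inside a \emph{strictly larger} Darboux chart, so that the cut-off field still equals $x\partial_x+y\partial_y+2z\partial_z$ on all of $\overline{B}$. Such a chart exists for the Darboux balls relevant here (those used in forming connected sums): their boundary is a convex sphere with trivial dividing set, so $\overline{B}$ has a standard neighbourhood by Giroux's theory of convex surfaces --- equivalently, one may simply take $B$ to be a round ball inside a larger Darboux chart, which is how Darboux balls are usually defined. With the vector field cut off outside such a neighbourhood, its backward flow contracts all of $\overline{B}$ into $B_\varepsilon$, and the rest of your argument then goes through verbatim.
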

Clearly if $(M_1,\xi_1)$ is overtwisted then the connected sum is still overtwisted, but if both the summands are tight
then it is important to cite the following result of Colin \cite{Colin}.
\begin{teo}
 \label{teo:colin}
 The connected sum of two contact 3-manifolds 
 $(M_1\#M_2,\xi_1\#\xi_2)$ is tight if and only if $(M_1,\xi_1)$ and $(M_2,\xi_2)$ are both tight.
\end{teo}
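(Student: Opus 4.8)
The plan is to prove the two implications separately, isolating the easy direction, which is essentially a restatement of Proposition \ref{prop:disks}, from the genuinely hard gluing direction. For the implication that tightness of the connected sum forces both summands to be tight, I would argue by contraposition. If one summand, say $(M_1,\xi_1)$, is overtwisted, then Proposition \ref{prop:disks} produces an overtwisted disk lying entirely inside $M_1'\subset M_1\#M_2$. This disk survives in the glued manifold, so $(M_1\#M_2,\xi_1\#\xi_2)$ is overtwisted and in particular not tight.

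For the converse, suppose both $(M_1,\xi_1)$ and $(M_2,\xi_2)$ are tight and assume toward a contradiction that the connected sum contains an overtwisted disk $D$. Recall that $M_1\#M_2$ is obtained by gluing $M_1'$ to $M_2'$ along the boundary sphere $S\cong S^2$ of the excised Darboux balls. The strategy is to isotope $D$ until it is disjoint from $S$, so that it lies entirely in one piece, say $M_1'$; capping off $M_1'$ recovers $(M_1,\xi_1)$, and the presence of an overtwisted disk there contradicts its tightness. The central tool is convex surface theory: after a $C^\infty$-small perturbation I would make $S$ convex, and since $S$ bounds a Darboux ball on each side, Giroux's tightness criterion forces its dividing set to consist of a single circle. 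Giroux flexibility then allows me to normalize the characteristic foliation on $S$, and the standard uniqueness results guarantee that a collar $S^2\times[-1,1]$ carries a unique tight germ compatible with this dividing set.

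The remaining work is to empty the intersection $D\cap S$, which after a perturbation is a finite disjoint union of circles. I would run an innermost-circle argument: an innermost such circle $c$ bounds a subdisk $\Delta\subset D$ meeting $S$ only along $c$ and lying on one side of $S$. Since $S$ is a sphere, $c$ also bounds a disk $\delta\subset S$; replacing $\Delta$ by $\delta$ pushed slightly off $S$ to the same side yields a new disk with the same boundary as $D$ but with strictly fewer intersection circles. Because the pushed-off copy lies in the tight collar of the convex sphere, whose characteristic foliation is under control, this surgery introduces no new overtwisting, so the modified disk is still overtwisted. Iterating makes $D$ disjoint from $S$ and pushes it into a single summand, completing the contradiction.

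The step I expect to be the main obstacle is exactly this disjointification. The trouble is that a circle innermost on $D$ need not be innermost on $S$, so the cut-and-paste must be coordinated with the structure of the dividing set, and one must verify that each surgery stays inside a tight region while genuinely reducing the geometric intersection with $S$. Controlling this interaction between the two convex sides is precisely the content of Colin's original argument, and it is where the tightness of both summands together with the rigidity of the standard neck become indispensable.
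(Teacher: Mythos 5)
The paper itself does not prove this statement: it is Colin's theorem, quoted with the reference \cite{Colin}, so there is no internal argument to compare yours against and your proposal has to stand on its own. The easy direction is fine and matches the paper's logic exactly (Proposition \ref{prop:disks} puts an overtwisted disk inside $M_i'$, and it survives into the sum). The hard direction, however, contains a genuine gap, located precisely at the step you flagged, and it is worth spelling out why it is not merely a technicality.

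There are two distinct problems. First, the surgery as you describe it---$c$ innermost on $D$, bounding $\Delta\subset D$, replaced by $\delta\subset S$ pushed to the side of $\Delta$---need not produce an embedded disk: if $c$ is not innermost on $S$, then $\delta$ contains other circles of $D\cap S$ in its interior, and the pushed-off copy of $\delta$ meets $D\setminus\Delta$ in circles parallel to those. The standard repair is to choose $c$ innermost on $S$ instead, so that $\mathring{\delta}\cap D=\emptyset$, let $\Delta$ be the (possibly non-innermost) subdisk of $D$ bounded by $c$, and push $\delta$ to the side of the removed $\Delta$; this is embedded and strictly reduces the number of intersection circles. Second, and more seriously, the claim that ``this surgery introduces no new overtwisting, so the modified disk is still overtwisted'' cannot be justified in the way you attempt: being an overtwisted disk is a condition on the characteristic foliation of the disk itself, and cut-and-paste destroys all control over the foliation of the new interior, no matter how standard the collar of $S$ is. What the surgery actually preserves is a weaker property: since every cut happens in the interior, the new disk coincides with $D$ near $\partial D$, so its boundary is still Legendrian and its contact framing still agrees with the framing induced by the new disk. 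The correct induction hypothesis is therefore ``embedded disk with Legendrian boundary whose relative Thurston--Bennequin number is zero'', i.e.\ a violation of the Bennequin inequality---not ``overtwisted disk''. Correspondingly, the final contradiction once the disk lies in a single summand is not the tautology ``an overtwisted disk in a tight manifold'' but an appeal to Eliashberg's Bennequin inequality $\mathrm{tb}(\partial D)\leq -\chi(D)=-1$ for disks with Legendrian boundary in tight contact 3-manifolds, a nontrivial theorem that your argument never invokes. With these two repairs (innermost on $S$, and tracking the boundary framing rather than overtwistedness of the disk), your outline can be completed; as written, the central step is an assertion rather than an argument.
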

If a sphere $S$ separates a contact manifold $(M,\xi)$ into two components $M_1$ and $M_2$ such that smoothly it is
$M=M_1\#M_2$ then we can ask whether $\xi$ also splits accordingly. We need to recall the definition of the dividing set 
of a convex surface in a contact 3-manifold. If $v$ is a contact vector field in $(M,\xi)$ transverse to $S$ then
the set $$\Gamma_S=\{x\in S\:|\:v(x)\in\xi_x\}$$
is a collection of curves and arcs in $S$ and is called the dividing set of $S$ in $(M,\xi)$. 
More details can be found in \cite{Etnyre}. Now we can state the following lemma.
\begin{lemma}
 A convex separating 
 sphere $S$ in $(M,\xi)$ gives a connected sum decomposition $(M,\xi)=(M_1\#_S M_2,\xi_1\#_S \xi_2)$ if and only if
 the dividing set $\Gamma_S$ is trivial, which means that consists of a single closed curve.
\end{lemma}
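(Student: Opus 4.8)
The plan is to invoke Giroux's theory of convex surfaces, in particular the Giroux criterion relating the dividing set of a convex sphere to the tightness of its neighborhood, together with Eliashberg's uniqueness of tight contact structures on the $3$-ball. Throughout, recall that the equatorial sphere in the standard tight $(S^3,\xi_{\mathrm{std}})$, equivalently the boundary of a Darboux ball, is a convex sphere whose dividing set is a single closed curve.

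For the implication that a connected-sum decomposition forces $\Gamma_S$ to be trivial, assume $(M,\xi)=(M_1\#_S M_2,\xi_1\#_S\xi_2)$. By the very construction of the contact connected sum, a neighborhood of $S$ is obtained by gluing the boundaries of two removed Darboux balls, so the characteristic foliation induced on $S$ is the standard one coming from the boundary of a Darboux ball. Its dividing set is the equatorial curve, and hence $\Gamma_S$ consists of a single closed curve.

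For the converse, suppose $\Gamma_S$ is a single closed curve and write $M=N_1\cup_S N_2$ for the decomposition of $M$ induced by $S$, which realizes the smooth splitting $M=M_1\# M_2$. By the Giroux criterion a neighborhood of $S$ is tight, and since $\Gamma_S$ is connected the two regions $S\setminus\Gamma_S$ are discs. Giroux's flexibility theorem then lets us isotope $S$ so that its characteristic foliation agrees with the standard one on the boundary of a Darboux ball. We may therefore cap off each $N_i$ by gluing in a standard Darboux ball $B_i$ along $S$; Eliashberg's uniqueness theorem ensures that the contact structure $\xi_i$ obtained on $M_i=N_i\cup_S B_i$ is well-defined up to contactomorphism. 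Removing these balls again is precisely the connected sum operation, so $(M,\xi)=(M_1\#_S M_2,\xi_1\#_S\xi_2)$.

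The main obstacle is the converse direction, and specifically the identification of the germ of $\xi$ near $S$ with the connected-sum neck. This is exactly where the hypothesis that $\Gamma_S$ is a single curve enters: through the Giroux criterion it forces a tight, and in fact standard, neighborhood, so that the two complementary discs can be filled by Darboux balls. That the resulting decomposition does not depend on the auxiliary choices follows from the well-definedness of the contact connected sum recalled earlier in this section.
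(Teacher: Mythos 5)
Your proposal is correct and is essentially the paper's argument in expanded form: the paper's entire proof consists of citing (from Etnyre's lectures on open books and convex surfaces) the fact that a contact manifold with convex sphere boundary can be glued to a Darboux ball if and only if its dividing set is trivial. Your argument via the Giroux criterion, Giroux flexibility, and Eliashberg's uniqueness of the tight ball is precisely the standard proof of that cited fact, so the two approaches coincide in substance.
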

\begin{proof}
 The claim follows from the fact that a contact manifold, whose boundary is a convex sphere, can 
 be glued together with a Darboux ball if and only if its dividing set is trivial as shown in \cite{Etnyre}.
\end{proof}
Etnyre and Honda in \cite{Etnyre3} extended the definition of connected sum to Legendrian links. In order to describe the
construction we need to recall that Legendrian links in the standard contact 3-sphere can be represented 
with their front projection. This is the map  
$$\begin{aligned}
   S^1&\longrightarrow\R^2\\
   \theta\longmapsto&(x(\theta),y(\theta))
   \end{aligned}$$ 
where $(x(\theta),y(\theta),z(\theta))$ is the parametrization of $L$.
The front projection of a Legendrian link $L$ is a special diagram for $L$ with no vertical tangencies, that are replaced by 
cusps, and 
at each crossing, the slope of the overcrossing is smaller than
the one of the undercrossing.

Suppose now that we have two Legendrian links $L_1$ and $L_2$ in $(M_1,\xi_1)$ and $(M_2,\xi_2)$ respectively.
\begin{figure}[ht]
  \centering
  \def\svgwidth{5cm}
  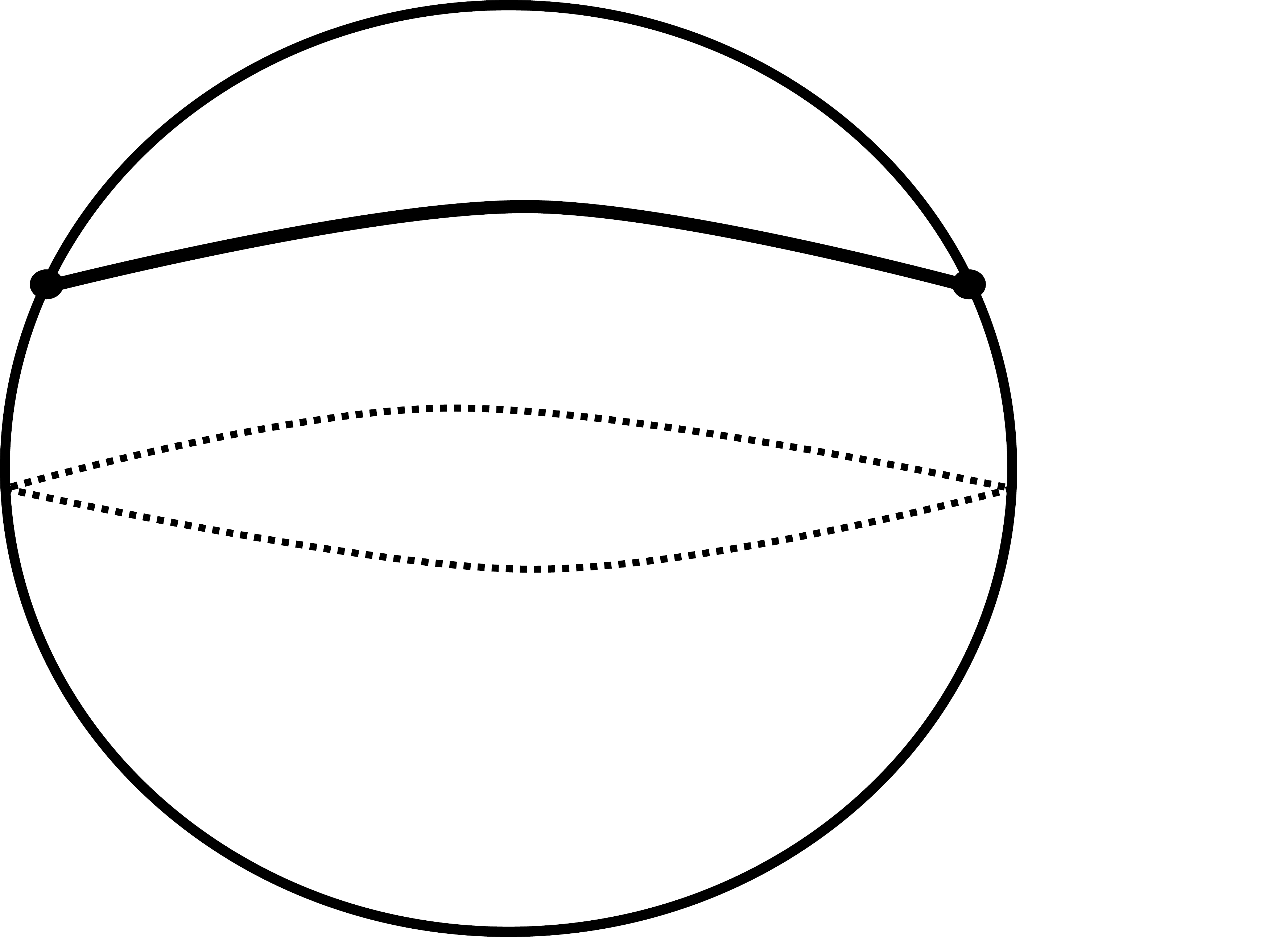   
  \caption{Front projection of $\alpha_i$ in a Darboux ball} 
  \label{Tangle}
\end{figure}
We take two
Darboux balls $D_1$ and $D_2$ as before, but with the condition that $D_i\cap L_i$ is a Legendrian arc $\alpha_i$ where
$\alpha_i\cap\partial D_i$ consists of two points and its front projection is isotopic to the one in Figure \ref{Tangle};
we call this the standard Legendrian tangle.
In this way, the link $L_1\#L_2$ in $(M_1\#M_2,\xi_1\#\xi_2)$ is Legendrian and it does not depend of the choice of the
Darboux balls $D_i$, but only of which component of $L_i$ contains the arc $\alpha_i$.

Let us consider the case of Legendrian links in overtwisted structures. We observe that if $L_1$ is loose then $L_1\#L_2$ is 
also loose, but the connected sum of two non-loose Legendrian links can be loose. 
In fact from \cite{Eliashberg2} we have that in $(S^3,\xi_{-1})$, where
$\xi_i$ is the overtwisted structure on $S^3$ with Hopf invariant equals to $i\in\Z$, 
there is a non-loose Legendrian unknot $K$. 
Then the knot $K\#K$ is a loose Legendrian unknot in $(S^3,\xi_{-2})$, because in \cite{Eliashberg2} it is
also proved that non-loose unknots in $S^3$ only appear in the structure $\xi_{-1}$.

\subsection{Proof of the main result}
In this subsection we prove Theorem \ref{teo:main}.
We only have to show that loose Legendrian knots with same classical invariants are Legendrian isotopic.
We need the following result, proved by Dymara in \cite{Dymara}.
\begin{teo}
  \label{teo:dymara}
  Consider a rational homology overtwisted 3-sphere $(M,\xi)$. Then two loose Legendrian knots $K_1$ and $K_2$ in $(M,\xi)$
  with the same classical invariants and 
  such that $K_1\cup K_2$ is loose are Legendrian isotopic.
\end{teo}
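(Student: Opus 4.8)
The plan is to reduce the statement to a comparison of two contact structures on a common knot complement and then invoke Eliashberg's classification of overtwisted structures. First I would use the equality of smooth knot types to choose an ambient smooth isotopy $f_t$ of $M$, with $f_0=\mathrm{id}$ and $f_1(K_1)=K_2$, carrying the contact framing of $K_1$ to that of $K_2$; this is possible because the Thurston--Bennequin numbers agree. Writing $\xi'=(f_1)_*\xi$, the knot $K_2$ is Legendrian for both $\xi$ and $\xi'$, and by the Legendrian neighbourhood theorem together with the equality of contact framings I may, after a further contactomorphism supported near $K_2$, assume $\xi=\xi'$ on a tubular neighbourhood $N$ of $K_2$. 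On the complement $C=M\setminus\mathrm{int}\,N$ I then have two contact structures $\xi|_C$ and $\xi'|_C$ that agree near $\partial C$. Both are overtwisted: the overtwisted disk furnished by the looseness of $K_1\cup K_2$ is disjoint from $K_2$, hence lies in $C$ and witnesses overtwistedness of $\xi|_C$, while its image under $f_1$ does the same for $\xi'|_C$.

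Next I would check that $\xi|_C$ and $\xi'|_C$ are homotopic as plane fields rel $\partial C$. Viewing an oriented plane field as a section of the unit tangent sphere bundle, the obstructions to a homotopy rel $\partial C$ lie in $H^2(C,\partial C;\Z)$ and $H^3(C,\partial C;\Z)$. The primary class in $H^2$ is the difference of relative Euler classes taken with the common boundary trivialisation; evaluating on the rational Seifert surface this difference equals $\mathrm{rot}(K_1)-\mathrm{rot}(K_2)=0$, and the rational homology sphere hypothesis guarantees that the rotation number is a complete measure of this class. The secondary class in $H^3$ is a relative $d_3$ (Hopf) invariant. Here I would use that $f_t$ is an isotopy of the identity, so $(f_t)_*\xi$ is a homotopy of plane fields on all of $M$ from $\xi$ to $\xi'$; thus $\xi$ and $\xi'$ are homotopic as plane fields on the closed manifold $M$. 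Since the $d_3$ invariant is additive under gluing along a surface where the fields agree and vanishes on $N$ (where $\xi=\xi'$), the vanishing of the global difference forces the relative obstruction on $C$ to vanish as well. Hence $\xi|_C\simeq\xi'|_C$ rel $\partial C$.

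With the homotopy in hand I would apply the relative form of Eliashberg's classification: two overtwisted structures on $C$ that agree near $\partial C$ and are homotopic as plane fields rel $\partial C$ are isotopic rel $\partial C$. Extending by the common structure on $N$ produces a path of contact structures on $M$ from $\xi$ to $\xi'$, fixed on $N$, and Gray stability converts this into an ambient contact isotopy. By the characterisation of Legendrian isotopy as ambient contact isotopy recalled in the introduction, this would exhibit $K_1$ and $K_2$ as Legendrian isotopic.

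The hard part will be the final assembly. The naive pushforward path $(f_t)_*\xi$ and the path produced by Eliashberg are two \emph{different} families of contact structures with the same endpoints, and producing from them a single contactomorphism of $(M,\xi)$ that is contact-isotopic to the identity and carries $K_1$ to $K_2$ requires matching these two paths. This is precisely where I expect to need the full, parametric strength of Eliashberg's theorem: the space of overtwisted structures inducing a fixed homotopy class of plane fields, with an overtwisted disk kept inside $C$, should be weakly homotopy equivalent to the corresponding space of plane fields, so that the two paths are homotopic rel endpoints through overtwisted structures and Gray's theorem can be run parametrically. The overtwisted-disk hypothesis is used exactly to keep the deformation inside this flexible regime throughout.
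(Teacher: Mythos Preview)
The paper does not supply its own proof of this theorem; it is quoted from Dymara~\cite{Dymara} and used as a black box in the proof of Theorem~\ref{teo:main}, so there is no argument in the paper to compare against.

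Your sketch is, however, exactly Dymara's strategy (written there for $S^3$): transport $K_1$ onto $K_2$ by a smooth isotopy matching the contact framings, standardise the two structures on a tubular neighbourhood $N$ via the Legendrian neighbourhood theorem, show that the two induced contact structures on the complement are overtwisted and homotopic as plane fields rel boundary (the $H^2$-obstruction governed by the rotation number once one uses that the absolute difference class already vanishes because $\xi$ and $(f_1)_*\xi$ are homotopic on all of $M$; the $H^3$-obstruction by additivity of the Hopf invariant), and then apply Eliashberg's relative classification. You have also located the genuinely delicate step correctly: composing the Gray isotopy coming from Eliashberg's path with the original smooth isotopy $f_t$ yields a contactomorphism of $(M,\xi)$ taking $K_1$ to $K_2$ that is a priori only \emph{smoothly} isotopic to the identity, and upgrading this to a contact isotopy---equivalently, contracting the resulting loop of overtwisted contact structures---is precisely where the one-parametric form of Eliashberg's theorem is invoked in~\cite{Dymara}. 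The hypothesis that an overtwisted disk lies in the complement of $K_1\cup K_2$ is what keeps the entire family inside the overtwisted regime so that this parametric h-principle applies.
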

\begin{proof}[Proof of Theorem \ref{teo:main}]
 The idea of the proof is to find another Legendrian knot $K$, with the same classical invariants as $K_1$ and $K_2$, and to 
 show that there are overtwisted disks in the complement of both $K_1\cup K$ and $K_2\cup K$. Then Theorem \ref{teo:dymara}
 gives that $K_1$ and $K_2$ are both Legendrian isotopic to $K$.

 If there is an overtwisted disk in the complement of $K_1\cup K_2$ then the claim follows immediately from Theorem 
 \ref{teo:dymara}. Then we suppose that this is not the case.
 
 Since $E_1$ and $E_2$ are disjoint disks we can find two closed balls $B_1$ and $B_2$ such that $E_i\subset B_i$ for $i=1,2$.
 Moreover, we can suppose that each $B_i$ is disjoint from $K_i$; this is because we start from the assumption that each $E_i$
 is in the complement of $K_i$.
 
 Now we have that the contact manifold $\left(B_i,\xi\lvert_{B_i}\right)$ is an overtwisted $D^3$. Then from Eliashberg's
 classification of overtwisted structures \cite{Eliashberg,Eliashberg3}
 we know that $(B_i,\xi\lvert_{B_i})\#(S^3,\xi_0)$ is contact 
 isotopic to $(B_i,\xi\lvert_{B_i})$. 
 \begin{figure}[ht]
  \centering
  \def\svgwidth{9cm}
  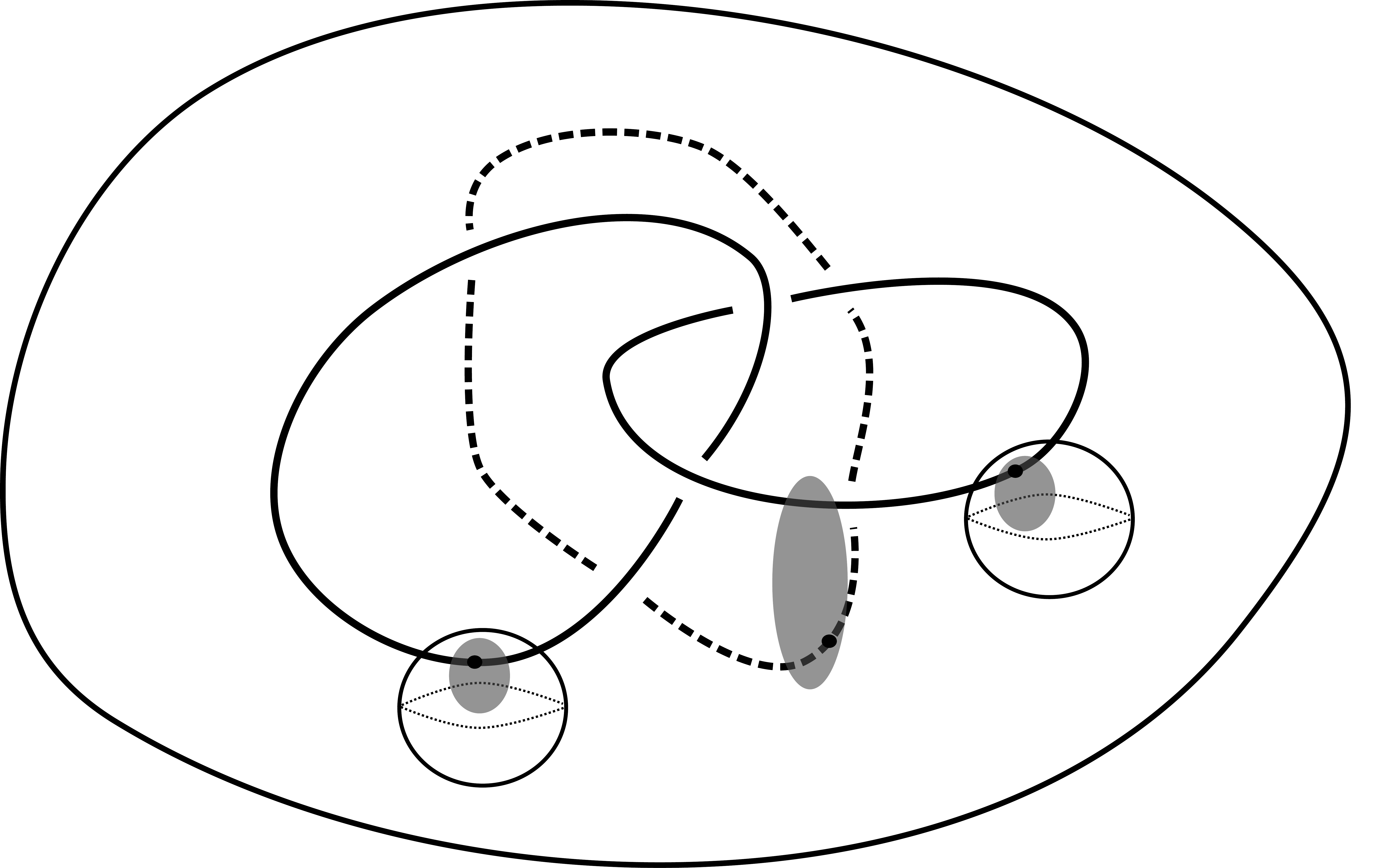   
  \caption{Three overtwisted disks are drawn in grey} 
  \label{Proof}
 \end{figure}
 This holds since the contact connected sum with $(S^3,\xi_0)$ does not change the isotopy class of the 
 2-plane field given by $\xi$. From this we have that inside $(B_i,\xi\lvert_{B_i})$ there is a sphere with trivial 
 dividing set which bounds a closed ball $D_i$ such that $\xi\lvert_{D_i}$ is contact isotopic to $\xi_0$. Moreover, 
 both $\mathring{D_i}$ and $B_i\setminus D_i$ contains some overtwisted disks by Proposition \ref{prop:disks}.
 
 At this point, we have found two closed balls $D_1$ and $D_2$ in $(M,\xi)$ such that each $D_i$ is in the complement 
 of $K_i$ and $\xi\lvert_{D_i}$ is contact isotopic to the overtwisted structure $\xi_0$; furthermore, the contact manifold 
 $M\setminus(D_1\cup D_2)$ is also overtwisted. This situation is pictured in Figure \ref{Proof}. Let us denote with $M'$ the 
 overtwisted manifold $M\setminus D_2$; the boundary of $D_1$ gives a contact connected sum decomposition of $M'$
 where the components are $M\setminus(D_1\cup D_2)$ and $D_1$. 
 Then the same argument that we applied before tells us
 that $(M\setminus(D_1\cup D_2),\xi)$ is contact isotopic to $(M',\xi)$ without a Darboux ball.
 
 Since $K_2$ is a Legendrian knot in $M'$ and we can suppose that a Darboux ball is missed by it, we can identify $K_2$ with a 
 Legendrian knot $K$ inside $M\setminus(D_1\cup D_2)$ with the same classical invariants as $K_2$, which are assumed
 to coincide with the ones of $K_1$. The Legendrian knot $K$ is disjoint from both the overtwisted balls $D_i$ and then 
 we find overtwisted disks in the complement of both $K_i\cup K$. This concludes the proof. 
\end{proof}
The condition on the overtwisted disks in Theorem \ref{teo:main} cannot be removed. Vogel in \cite{Vogel} gives an example
of two loose Legendrian unknots, both with Thurston-Bennequin number equal to zero and rotation number equal to one, that
are not Legendrian isotopic. In fact, although we can find overtwisted disks in the complement of both knots such disks
always intersect themselves.

\section{Disjoint union of Legendrian knots}
\label{section:disjoint}
An important case where the situation described in Theorem \ref{teo:main} appears is when we have a disjoint union of two 
Legendrian knots. In order to explain this application 
we need to define what is a disjoint union in the Legendrian setting, and to show
that is strictly related to the concept of split Legendrian links.

Take $L_i$ Legendrian links in $(M_i,\xi_i)$ for $i=1,2$. We define the disjoint union 
$L_1\sqcup L_2$ in $(M_1\#M_2,\xi_1\#\xi_2)$ as the Legendrian link given by a particular
connected sum $L_1\#\mathcal O_2\#L_2$; where 
$\mathcal O_2$ is the standard Legendrian unlink with 2 components in $(S^3,\xi_{\text{st}})$. 
The connected sum is such that one component of $\mathcal O_2$ is summed to $L_1$ and the other one to $L_2$ as shown in
Figure \ref{disjoint}.
\begin{figure}[ht]
  \centering
  \def\svgwidth{12cm}
  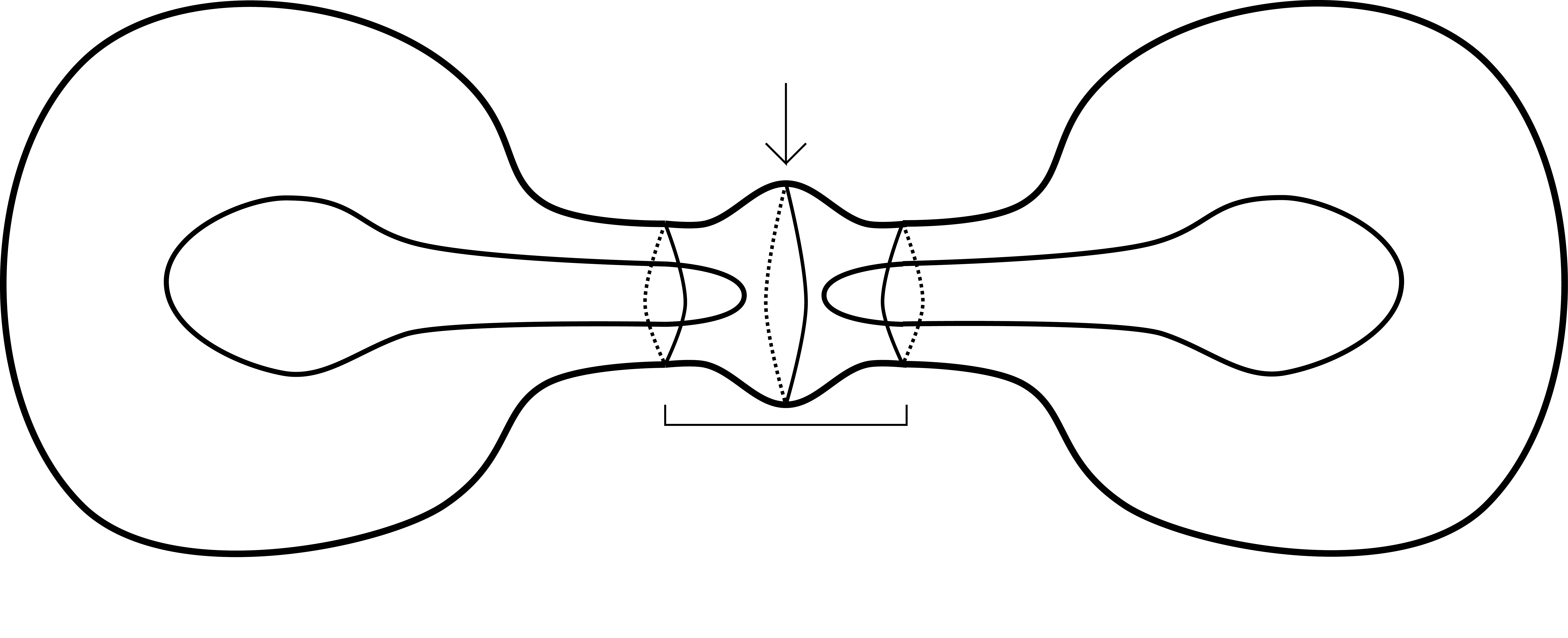   
  \caption{} 
  \label{disjoint}
\end{figure}
Note that $L_1$ and $L_2$ are Legendrian links in $(M_1\#M_2,\xi_1\#\xi_2)$.

In the same way, we say that a Legendrian link $L$, in a contact 3-manifold $(M,\xi)$, is split if 
there exist $L_i\hookrightarrow(M_i,\xi_i)$ for $i=1,2$ such that $(M,\xi)=(M_1,\xi_1)\#(M_2,\xi_2)$ and 
$L$ is Legendrian isotopic to the disjoint union of $L_1$ and $L_2$.
Otherwise, we say that $L$ is non-split.
From this definition we have the following proposition.
\begin{prop}
 \label{prop:disjoint}
 A Legendrian link $L$ in $(M,\xi)$ is split if and only if there are two Legendrian links $L_i\hookrightarrow(M,\xi)$
 and a separating, convex sphere $\Sigma\hookrightarrow(M,\xi)$, whose dividing set is trivial, such that
 \begin{itemize}
  \item The surface $\Sigma$ determines a contact connected sum decomposition of $(M,\xi)$; where the summands are
        $(M_1,\xi_1)$ and $(M_2,\xi_2)$.
  \item Each $L_i$ is embedded in $(M_i,\xi_i)$.
  \item The Legendrian link $L$ is the union of $L_1$ and $L_2$.
 \end{itemize}
\end{prop}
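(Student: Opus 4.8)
The plan is to prove the two implications separately, the common core being the observation that the standard Legendrian unlink $\mathcal{O}_2\subset(S^3,\xi_{\text{st}})$ is itself split by a convex sphere with trivial dividing set, and that this sphere is unaffected by the connected-sum operations defining $L_1\sqcup L_2$ (as depicted in Figure \ref{disjoint}).

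Suppose first that $L$ is split. By definition $(M,\xi)=(M_1,\xi_1)\#(M_2,\xi_2)$ and $L$ is Legendrian isotopic to $L_1\sqcup L_2=L_1\#\mathcal{O}_2\#L_2$. In the tight sphere $(S^3,\xi_{\text{st}})$ sitting in the middle there is a convex sphere $\Sigma_0$ separating the two components of $\mathcal{O}_2$ into two balls; since $S^3$ is tight its dividing set is a single curve, hence trivial. The two Darboux balls along which $M_1$ and $M_2$ are attached lie on opposite sides of $\Sigma_0$, so once the connected sums are performed $\Sigma_0$ becomes a separating sphere in $(M,\xi)$ cutting it into a punctured $M_1$ containing $L_1\#O$ and a punctured $M_2$ containing $O'\#L_2$, where $O,O'$ are the components of $\mathcal{O}_2$. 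Since summing with the standard maximal-$\mathrm{tb}$ unknot is the identity up to Legendrian isotopy (the classical invariants being additive under connected sum), these pieces are Legendrian isotopic to $L_1$ and $L_2$. Transporting $\Sigma_0$ along the Legendrian isotopy that carries $L_1\sqcup L_2$ to $L$ produces the desired sphere $\Sigma$, whose trivial dividing set certifies, via the lemma on convex separating spheres, the contact connected-sum decomposition with summands $(M_1,\xi_1)$ and $(M_2,\xi_2)$.

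Conversely, suppose $\Sigma$ is given. Cutting $(M,\xi)$ along $\Sigma$ and capping each side with a Darboux ball---which is legitimate precisely because $\Gamma_\Sigma$ is trivial, by that same lemma---recovers closed summands $(M_1,\xi_1)$ and $(M_2,\xi_2)$ with $L_i\subset M_i$ and $(M,\xi)=(M_1,\xi_1)\#(M_2,\xi_2)$. It then remains to identify $L=L_1\cup L_2$ with the disjoint union $L_1\sqcup L_2$. For this I would verify directly that the recipe $L_1\#\mathcal{O}_2\#L_2$ reproduces $L_1\cup L_2$ up to Legendrian isotopy: the middle $S^3$ is absorbed into the two summands, each unknotted component of $\mathcal{O}_2$ cancels under connected sum as above, and because $\mathcal{O}_2$ is unlinked the two strands run through the connecting neck without becoming linked, so $L_1$ and $L_2$ stay in their respective summands, disjoint and separated by $\Sigma$.

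I expect the main obstacle to be the bookkeeping that relates the abstract connected-sum definition to an honestly embedded convex sphere: one must check that the splitting sphere of $\mathcal{O}_2$ retains a trivial dividing set after the connected sums and that it realizes exactly the decomposition into $(M_1,\xi_1)$ and $(M_2,\xi_2)$ rather than some other pair of summands. This is where the preceding lemma---trivial dividing set if and only if a connected-sum decomposition---does the decisive work, translating a statement about dividing sets into one about splittings. The remaining points, namely that connected sum with the standard Legendrian unknot is a no-op and that the two balls used in the construction straddle $\Sigma_0$, are routine once the picture in Figure \ref{disjoint} is set up carefully.
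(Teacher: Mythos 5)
Your proposal is correct and takes essentially the same route as the paper: the converse direction rests, exactly as in the paper, on the Lemma about convex spheres with trivial dividing set together with cancellation of the unlink components of $\mathcal O_2$ --- which is what the paper implements concretely by inserting an auxiliary standard Legendrian unknot $K$ in a Darboux ball $B_1$ next to $L_1$, recognizing $L_1\cup K=L_1\#\mathcal O_2$ via a second convex sphere in $B_1$, and then summing $K$ with $L_2$ --- while your forward direction simply fills in what the paper dismisses as ``trivial.'' One parenthetical justification should be repaired: connected sum with the standard Legendrian unknot is the identity because that unknot is the unit of the Etnyre--Honda connected sum operation \cite{Etnyre3}, not because the classical invariants are additive, since classical invariants do not in general determine the Legendrian isotopy class.
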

\begin{proof}
 The only if implication is trivial. For the other one we take $B_1$ to be a
 Darboux ball in $(M_1,\xi_1)$ such that $(B_1,B_1\cap L_1)$ is the standard Legendrian tangle as in Figure \ref{Tangle}.
 
 In $B_1$ we find a standard Legendrian unknot $K$, disjoint from $L_1$. We denote by $L_1'$ the new Legendrian link
 obtained from $L_1$ by adding the unlinked component $K$.
 Since $(B_1,B_1\cap L_1)$ is the standard Legendrian tangle, we can find another convex sphere in $B_1$ which gives that
 $L_1'=L_1\#\mathcal O_2$.
 
 At this point, it is clear that $L_1\sqcup L_2=L_1'\#L_2$, where this connected sum is performed between $K$ and
 $L_2$, is a Legendrian link wich is Legendrian isotopic to $L$.
\end{proof}
From Theorem \ref{teo:colin} we observe that if $\xi$ is tight then $L$ is a split Legendrian link if and only if
its smooth link type is split.

\begin{proof}[Proof of Corollary \ref{cor:disjoint}]
 We have that $(M,\xi)=(M_1\#M_2,\xi_1\#\xi_2)$ and $K_i\hookrightarrow(M_i,\xi_i)$. Hence, if $K_1$ is loose in
 $(M_1,\xi_1)$ then clearly we can apply Theorem \ref{teo:dymara}; on the other hand, if $(M_1,\xi_1)$ is tight then
 Theorem \ref{teo:colin} gives that $K_2$ is loose in $(M_2,\xi_2)$ and then we can use the same argument.
 We only have to consider the case when both $K_i$ are non-loose in $(M_i,\xi_i)$. Then we
 apply Theorem \ref{teo:main}, since we can find overtwisted disks in both summands using Proposition \ref{prop:disks}.
\end{proof}
We conclude with the following observation. Let us consider two loose Legendrian knots $K_1$ and $K_2$ in $(M,\xi)$ with
same classical invariants and such that $L=K_1\cup K_2$ is a topologically split 2-component Legendrian link.
Corollary \ref{cor:disjoint} says that if $K_1$ is not Legendrian isotopic to $K_2$ then $L$ is not split as Legendrian link.
The example of Vogel that was mentioned at the end of Section \ref{section:main} falls into this case.

\end{document}